\journalname{Annals of the Institute of Statistical Mathematics}
\tikzset{discont/.style={decoration={zigzag,segment length=12pt, amplitude=4pt},decorate}}
\def\discontarrow(#1)(#2)(#3)(#4);{
  \draw[discont] (#2) -- (#3);
  \draw[->] (#1) -- (#2) (#3) -- (#4);}
\newcommand{\I}{\mathcal{I}}
\newcommand{\E}{E}
\newcommand{\R}{\mathbb{R}}
\newcommand{\reals}{\mathbb{R}}
\newcommand{\N}{\mathbb{N}}
\renewcommand{\P}{\mathcal{P}}
\newcommand{\subP}{\P_{\psi,\varepsilon}}
\newcommand{\QQ}{\mathcal{Q}}
\newcommand{\Y}{\mathcal{Y}}
\newcommand{\mode}{\Gamma_{\text{mode}}}
\newcommand{\heinrichp}{p_{\text{mix}}}
\newcommand{\argmin}{\mathop{\mathrm{argmin}}}
\newtheorem{myclaim}{Claim}
\begin{document}

\title{On the Indirect Elicitability of the Mode and Modal Interval\thanks{This work was supported by National Science Foundation Grant CCF-1657598.}}


\author{Krisztina Dearborn \and Rafael Frongillo}


\institute{K. Dearborn \at
              Department of Mathematics, University of Colorado Boulder \\ Campus Box 395, Boulder, CO, USA 80309  \\
              \email{krisztina.dearborn@colorado.edu}                      \and
           R. Frongillo (corresponding author)\at
              Department of Computer Science, University of Colorado Boulder\\ 1111 Engineering Dr, Boulder, CO, USA 80309\\
              \email{raf@colorado.edu}
}

\date{Received: date / Revised: date}

\maketitle

\begin{abstract}
Scoring functions are commonly used to evaluate a point forecast of a particular statistical functional.
  This scoring function should be consistent, meaning the correct value of the functional is the Bayes act, in which case we say the scoring function elicits the functional.
  Recent results show that the mode functional is not elicitable.
  In this work, we ask whether it is at least possible to indirectly elicit the mode, wherein one elicits a low-dimensional functional from which the mode can be computed.
  We show that this cannot be done: neither the mode nor a modal interval are indirectly elicitable with respect to the class of identifiable functionals.
\keywords{Elicitation \and Point forecast \and Scoring function \and Loss function \and Mode \and Modal interval.}
\end{abstract}

\section{Introduction}\label{intro}
To evaluate point forecasts, one commonly uses a scoring function, also called a loss function, which measures the inaccuracy of the forecast relative to an observed outcome.
Loss functions are also used in estimation, forecast ranking and comparison, model selection, and back-testing~\citep{gneiting2007strictly,gneiting}.
In all of these applications,
given a target statistical functional, we desire a consistent loss function, meaning that the correct value of the functional is the Bayes act with respect to the loss.
In this case, we say the loss function \emph{elicits} the functional.

While many common statistics are elicitable, such as the mean, median, and quantiles, it is well-known that the variance is not.
This impossibility follows from an observation of Osband~\citeyearpar{osband} that elicitable functionals have convex level sets, meaning mixtures of distributions with the same functional value must again have the same functional value, an axiom which the variance does not satisfy.
(Indeed, mixtures generally have higher variance.)
Nonetheless, several authors have pointed out that the variance is \emph{indirectly} elicitable: one may elicit the first and second moment of the distribution, and then combine these values with a link function to obtain the variance.
The minimum number of dimensions required in such an indirect elicitation scheme (for the variance, 2) is referred to as the elicitation order, or elicitation complexity, of the functional in question~\citep{lambert08}.
Recently, several important non-elicitable functionals, including many risk measures such as conditional value at risk, have been shown to be indirectly elicitable with low elicitation complexity~\citep{lambert08,frongillo_kash,fissler2016higher}.

Heinrich~\citeyearpar{heinrich} recently showed that another common statistic, the mode functional, is not elicitable, despite the fact that its level sets are convex.
It is therefore natural to ask whether the mode is indirectly elicitable, and if so, determine its elicitation complexity.
Our main result is that the mode has infinite elicitation complexity with respect to identifiable functionals, a relatively weak restriction (see Definitions~\ref{def:iden} and~\ref{def:elic-complex} and the discussion following).
Interestingly, our results also extend to modal intervals, which are elicitable, as we discuss in Section~\ref{sec:modal-interval}.

Our results show that it is impossible to develop a consistent loss function for evaluating point forecasts of the mode, even indirectly.
Moreover, they cast doubt on the existence of broadly effective empirical risk minimization schemes for estimating the mode or a modal interval.
Our techniques differ from previous work~\citep{frongillo_kash}, and may be applicable to other functionals of interest.
We conclude with open questions, including a discussion of other notions of elicitation complexity and other properties.

\section{Setting}\label{back}
Let $\P$ be a set of probability measures on a common measurable space $(\Y, \mathcal{F})$\@.
For each probability measure $P\in\P$, denote the expectation of a random variable $Y$ with distribution $P$ by $E_P Y$\@.
We will use the term ``property'' to refer to a statistical functional taking values in a report space $\mathcal{R}$, often a subset of $\reals$ or $\reals^k$.

\begin{definition}[Property]
A property is a functional $\Gamma:\P\to \mathcal{R}$ which assigns a report value to each probability measure in $\P$\@.
\end{definition}

For example, considering probability measures on the measurable space $\Y=\R$, with $\mathcal{F}$ being the Borel $\sigma$-algebra on $\Y$, the mean $\Gamma(P)=E_P(Y)$ is a real-valued property\@.
Similarly, another real-valued property is the variance, $\Gamma(P)=E_P(Y-E_P(Y))^2$\@.
Our focus in this paper will be the mode, which will be defined with care at the end of this section\@.

We next formalize our notion of consistency, which ensures that the Bayes act for a loss function coincides with the desired property.

\begin{definition}[Elicits]
  \label{def:elic}
  A loss function $L:\mathcal{R}\times\Y\to \R$ elicits a property $\Gamma:\P\to\mathcal{R}$  if for every $P\in \P$ we have $\displaystyle\{\Gamma(P)\}=\argmin\nolimits_rE_P(L(r,Y))$\@.
 We say $\Gamma$ is elicitable if there exists some loss function that elicits $\Gamma$.
 For all $k\in\N$ the set of elicitable properties $\Gamma:\mathcal{P}\to\R^k$ will be denoted $\mathcal{E}_k(\mathcal{P})$\@.
\end{definition}

\noindent
For example, the mean is elicited by squared loss $L(r, y)=(r-y)^2$\@.

The following concept of identifiability, due to Osband~\citeyearpar{osband}, has played a central role in the theory of property elicitation\@.
The definition states that each level set of the property, that is, the set of distributions sharing a particular value $r$ of the property, can be described by a linear constraint which depends on $r$.
Note that Steinwart et al.~\citeyearpar{steinwart} adopt a weaker notion of identifiability, wherein the condition need only hold for almost every level set, and call the definition below ``strong identifiability''; see Section~\ref{dis}.
\begin{definition}[Identification]
  \label{def:iden}
A property $\Gamma:\P\to\mathcal{R}\subseteq \R^k$ is identifiable if there exists an identification function $V:\mathcal{R}\times \Y\to \R^k$ such that for all $P\in \P$ we have $\Gamma(P)=r$ if and only if $E_P (V(r, Y))={0}$\@.
Let $\mathcal{I}_k(\mathcal{P})$ denote the class of all properties from $\mathcal{P}$ to $\R^k$ which are identifiable\@.
\end{definition}

\noindent
To illustrate, the mean is identified by the function $V(r, y)=y-r$.

Let us return to the notion of elicitation, and consider the variance $\Gamma(P) = E_P(Y- E_P(Y))^2$.
As observed by Osband~\citeyearpar{osband}, for a property to be elicitable it must have convex level sets: the set of distributions having the same property value must be convex\@.
It follows immediately that the variance is not elicitable\@.
As noted in the introduction, however, the variance can be expressed as a function, or link, of elicitable properties, for example the mean and second moment: $\Gamma(P) = E_P(Y^2)- ( E_P(Y))^2$.
This motivates the notion of indirect elicitability, wherein one elicits an intermediate property, and then computes a link function to obtain the original property\@.
When confronted with non-elicitable properties, it is therefore natural to ask the minimal dimension of such an intermediate elicitable property; this is the notion of elicitation complexity~\citep{lambert08,frongillo_kash,fissler2016higher}\@.
As we explain following the definition, we further require that these intermediate properties be identifiable.

\begin{definition}[Identifiable Elicitation Complexity]
 \label{def:elic-complex}
  Let $\I = \bigcup_{k\in\N} \I_k(\P)$ be the class of identifiable properties.
For $k\in \N$, a property $\Gamma:\P\to \mathcal{R}$ is $k$-elicitable with respect to $\I$ if there exists an elicitable property $\hat{\Gamma}\in\mathcal{E}_k(\mathcal{P})\cap\I$ and a function $f:\R^k\to \mathcal{R}$ such that $\Gamma=f\circ \hat{\Gamma}$\@.
The identifiable elicitation complexity of $\Gamma$ is then the minimum of all $k$ such that $\Gamma$ is $k$-elicitable with respect to $\I$\@.
\end{definition}

Without imposing such a restriction on the class of intermediate properties, the definition of elicitation complexity would be trivial, as noted by Frongillo \& Kash~\citeyearpar{frongillo_kash}: all properties of distributions on $\reals$ have complexity 1 by first eliciting the entire distribution via set-theoretic bijections between $\R$ and $\R^\N$ (see also the discussion following Corollary~\ref{cor:modal})\@.
To justify the restriction to identifiability in particular, first note that nearly all natural elicitable properties are identifiable, including expectations, ratios of expectations, quantiles, and expectiles.
Second, the results of Lambert~\citeyearpar{lambert18} and Steinwart et al.~\citeyearpar{steinwart} show that continuous non-locally-constant functionals are elicitable if and only if they are weakly identifiable, meaning identifiability is essentially necessary for continuous non-locally-constant properties $\hat{\Gamma}$ in Definition~\ref{def:elic-complex}.
Third, elicitable properties which are not identifiable are often indirectly elicitable via finite-dimensional identifiable properties, as is the case for all finite elicitable properties (those taking values in a finite set); this observation is particularly relevant as we give infinite lower bounds.

Returning to the example of the variance, we see that while it is not elicitable, its identifiable elicitation complexity is at most~$2$\@.
The variance can be recovered via the function $f(x_1, x_2)=x_2-x_1^2$ composed with the identifiable and elicitable vector-valued property $\hat{\Gamma}(P)=(E_P(Y), E_P(Y^2)) \in \R^2$\@.
In this case the identification function for $\hat{\Gamma}$ is $V(r,y)=(y-r_1, y^2-r_2)$ where $r=(r_1,r_2)$\@.
There is a distinction between a property which is elicitable like the mean, $\Gamma(P)=E_P(Y)$, and a property which is $1$-elicitable like the mean squared, $\Gamma(P)=(E_P(Y))^2$.
While every elicitable real-valued property is trivially $1$-elicitable via the identity function, not every $1$-elicitable property is elicitable\@.
The mean squared fails to be elicitable, but is $1$-elicitable\@.

Finally, we define identification complexity, which trivially lower bounds identifiable elicitation complexity, a fact we use extensively in our results\@.

\begin{definition}[Identification Complexity]
  \label{def:iden-complex}
For $k\in \N$, a property $\Gamma:\P\to\mathcal{R}$ is $k$-identifiable if there exists an identifiable property $\hat{\Gamma}\in\mathcal{I}_k(\mathcal{P})$ and a function $f:\R^k\to \mathcal{R}$ such that $\Gamma=f\circ \hat{\Gamma}$\@.
Furthermore, the identification complexity of $\Gamma$ is the minimum of all $k$ such that $\Gamma$ is $k$-identifiable\@.
\end{definition}

For the remainder of this section, we turn to the mode, which we define as in Gneiting~\citeyearpar{gneiting} and Heinrich~\citeyearpar{heinrich}\@.
Letting $\varepsilon>0$ and $P\in \P$, consider the cumulative distribution function $F$ associated with $P$\@.
A modal interval is any interval of the form $[x-\varepsilon,x+\varepsilon]$ to which $F$ assigns maximal probability\@.
Let $\Gamma_\varepsilon$ denote a midpoint of a modal interval, defined as
\begin{equation}
  \label{eq:modal-midpoint}
  \Gamma_\varepsilon(P) \in \arg \max_x \; \left( F(x+\varepsilon)-\lim_{z\uparrow x-\varepsilon}F(z) \right)~.
\end{equation}

Regardless of whether the modal interval is unique we can use its midpoint to define the mode of the distribution\@.
Suppose there exists a sequence of real numbers $\{\varepsilon_n\}$ where $\varepsilon_n\to 0$ as $n\to\infty$ and a corresponding choice of midpoints of modal intervals $\{\Gamma_{\varepsilon_n}(P)\}$ converging to a real number, $\mode(P)$. Then $\mode(P)$ is the mode of the distribution\@.
This definition is careful not to assume that a probability density exists\@.
In the case where the distribution function $F$ is absolutely continuous and admits a continuous density $p$, then $\mode(P)$ coincides with the global maximum of $p$\@.
When working with a discrete probability distribution, $\mode(P)$ corresponds to the point(s) associated with maximal probability\@.

We will refer to probability measures which have a well-defined and unique mode as \emph{unimodal}\@.
If a probability measure is unimodal and there exists a probability density associated with it, the density does not necessarily have a unique local maximum, a stronger requirement\@.
For example, a Gaussian density is unimodal in both senses of the term,
whereas a mixture of Gaussians with unit variance and strictly distinct weights does not necessarily have a unique local maximum, but does have a well-defined and unique mode and thus is unimodal\@.
(See Section~\ref{dis} for a discussion of the stronger definition\@.)

\section{Impossibility}\label{results}
Heinrich~\citeyearpar{heinrich} demonstrates that the mode is not directly elicitable with respect to several classes of unimodal probability measures.
We proceed by studying the identifiable elicitation complexity of the mode\@.
Our main results Theorems~\ref{thm:bounded-density} and~\ref{thm:gaussians} both show that the identifiable elicitation complexity of the mode is infinite with respect to two classes of probability measures\@.
These results imply that, when restricting to identifiable intermediate properties, the mode is not even indirectly elicitable.

To begin, let $\P$ denote the class of unimodal probability measures defined on the real line which admit a smooth and bounded density\@.
Below we will define a class $\subP$ of probability measures within $\P$ consisting of (finite) mixtures of normalized bump functions which will be the class of probability measures employed in Lemma~\ref{lem:not-iden}, Theorem~\ref{thm:bounded-density}, and Corollary~\ref{cor:modal}\@.
We will denote by $\QQ \subset \P$ the class of probability measures which can be expressed as a (finite) mixture of Gaussians, the focus of Theorem~\ref{thm:gaussians}\@.
Since each $P\in\P$ admits a unique probability density $p$, we will identify the probability measure $P$ with its density $p$, and use the two interchangeably\@.
Hence, when we choose an element $p\in \P$, we mean the probability density $p$ associated with a probability measure $P\in\P$\@.
Finally, $\mode(P)$ and $\mode(p)$ both denote the mode of the distribution $P$ as defined in Section~\ref{back} which corresponds to the global maximum of $p$\@.

We define the bump function centered at $0$ of width $2\varepsilon>0$ as follows,
\begin{equation}
  \label{eq:bump}
  \psi_{0,\varepsilon}(x)= \begin{cases}
   \tfrac 1 {c_\varepsilon} \exp\left(-\frac{1}{\varepsilon^2-x^2}\right) & |x| < \varepsilon\\
    0 & |x| \geq \varepsilon~,
  \end{cases}
\end{equation}
where $c_\varepsilon=\int_{-\varepsilon}^\varepsilon \exp(-1/(\varepsilon^2-x^2))~ dx$\@.
We then define the bump centered at $x_0$ to be the function $\psi_{x_0,\varepsilon}(x) = \psi_{0,\varepsilon}(x-x_0)$.
Note that $\psi_{x_0,\varepsilon} \in \P$ and $\mode(\psi_{x_0,\varepsilon}) = x_0$.
Let $\subP$ denote the class of distributions in $\P$ which are finite mixtures of bump functions in the set $\{\psi_{4t\varepsilon,\varepsilon} : t\in\mathbb{N}\}$, i.e., of width $2\varepsilon$ centered at $\{0,4\varepsilon,\dots, 4(t-1)\varepsilon,4t\varepsilon,\dots\}$\@.

To build intuition, let us first see why the mode itself is not identifiable.
In fact, we will establish the stronger statement that the mode is not identifiable with respect to $\subP \subset \P$.
(See also \cite[Lemma 2.4]{fissler2017order}.)

\begin{lemma}\label{lem:not-iden}
  The mode, $\mode:\P\to \mathcal{R}$, is not identifiable with respect to $\P$, the class of unimodal probability measures defined on the real line which admit a smooth and bounded density\@.
\end{lemma}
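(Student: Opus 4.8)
The plan is to prove the stronger statement that the mode fails to be identifiable already on the subclass $\subP\subset\P$ of finite mixtures of the bump functions $\psi_{4t\varepsilon,\varepsilon}$; since any identification function for $\mode$ on $\P$ would restrict to one on $\subP$ (the defining ``iff'' in Definition~\ref{def:iden} holds in particular for every $P\in\subP$), non-identifiability on $\subP$ implies the lemma. I would argue by contradiction: suppose $V:\mathcal{R}\times\Y\to\R$ identifies $\mode$ on $\subP$, and fix attention on the single report value $r=0$, writing $v_0(y):=V(0,y)$.

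The key structural observation is that the bumps $\psi_{4t\varepsilon,\varepsilon}$, $t\in\N$, all share a common shape and have pairwise disjoint supports (the gap between consecutive supports is $2\varepsilon$). Hence for any mixture $p=\sum_t w_t\,\psi_{4t\varepsilon,\varepsilon}\in\subP$, at each point the density equals a single scaled copy of the common bump, so its global maximum—and therefore $\mode(p)$—is attained at the center $4t^\ast\varepsilon$ of the bump carrying the (uniquely) largest weight $w_{t^\ast}$. This reduces computing the mode of a mixture to comparing its weights, which is precisely what makes $\subP$ convenient.

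With this in hand I would extract a contradiction from only two bumps. Set $a_t:=E_{\psi_{4t\varepsilon,\varepsilon}}(v_0)$, so that by linearity $E_p(v_0)=\sum_t w_t a_t$ for every mixture $p$. Evaluating on the single bump $\psi_{0,\varepsilon}$, whose mode is $0$, identifiability forces $a_0=0$; evaluating on the single bump $\psi_{4\varepsilon,\varepsilon}$, whose mode is $4\varepsilon\neq 0$, forces $a_1\neq 0$. Now consider the mixture $p=\tfrac23\psi_{0,\varepsilon}+\tfrac13\psi_{4\varepsilon,\varepsilon}$: by the structural observation its mode is $0$ (the heavier bump is centered at $0$), so identifiability requires $0=E_p(v_0)=\tfrac23 a_0+\tfrac13 a_1=\tfrac13 a_1$, whence $a_1=0$, contradicting $a_1\neq 0$.

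The only real care needed is in the structural step: I must verify that each such two-bump mixture with unequal weights genuinely lies in $\P$ (it is a smooth, bounded density with a unique global maximum, hence unimodal in the paper's sense) and that its mode coincides with the global maximum of its density, so that the characterization ``$\mode(p)=0$ iff $E_p(v_0)=0$'' may legitimately be invoked. Everything else reduces to linearity of the expectation in the mixture weights together with the defining equivalence of Definition~\ref{def:iden}.
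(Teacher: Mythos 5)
Your proposal is correct and takes essentially the same route as the paper's proof: the same two-bump mixture $\tfrac23\psi_{0,\varepsilon}+\tfrac13\psi_{4\varepsilon,\varepsilon}$ in $\subP$ (the paper fixes $\varepsilon=1$), the same use of linearity of expectation in the mixture weights together with the defining equivalence of identifiability at the report $r=0$. The only cosmetic difference is where the contradiction is localized---you first deduce $a_1\neq 0$ from $\mode(\psi_{4\varepsilon,\varepsilon})=4\varepsilon$ and then derive $a_1=0$ from the mixture, while the paper derives $E_{\psi_{4,1}}V(0,Y)=0$ and concludes the absurdity $\mode(\psi_{4,1})=0$.
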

\begin{proof}
  For a contradiction, suppose there exists $V:\R\times\reals\to\reals$ such that $\mode$ is identified by $V$.
  For $h = 2/3$ define the density $p_h = h \psi_{0,1} + (1-h)\psi_{4,1}$ in $\P_{\psi,1}$.
  Clearly, $\mode(p_h) = \mode(\psi_{0,1}) = 0$, and since $V$ identifies $\mode$, we thus have $\E_{p_{h}}V(0,Y) = 0$ and $\E_{\psi_{0,1}} V(0,Y) = 0$.
  Combining,
  \[ 0 = \E_{p_h}V(0,Y) = h\E_{\psi_{0,1}} V(0,Y) + (1-h)\E_{\psi_{4,1}} V(0,Y) = (1-h)\E_{\psi_{4,1}} V(0,Y)~,\]
  from which we conclude $\E_{\psi_{4,1}} V(0,Y) = 0$ and thus $\mode(\psi_{4,1}) = 0$, a contradiction.
\end{proof}

We now see that the mode is not identifiable, but it remains to understand its identifiable elicitation complexity\@.
Theorem~\ref{thm:bounded-density} generalizes the argument of Lemma~\ref{lem:not-iden}, showing that the mode is not \emph{indirectly} identifiable with respect to $\P$, the class of unimodal probability measures defined on the real line which admit a smooth and bounded density.
In other words, for this class $\P$, there is no way to express the mode as a function of a finite-dimensional identifiable property.
We conclude that the identifiable elicitation complexity of the mode is infinite with respect to $\P$\@.

\begin{theorem} \label{thm:bounded-density} 
The mode, $\mode$, has infinite identifiable elicitation complexity with respect to $\P$, the class of unimodal probability measures defined on the real line which admit a smooth and bounded density\@.
\end{theorem}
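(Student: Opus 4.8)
The plan is to prove the stronger statement that $\mode$ has infinite \emph{identification} complexity with respect to $\P$, which suffices since identification complexity lower bounds identifiable elicitation complexity (Definition~\ref{def:iden-complex}). Concretely, I would fix $k\in\N$, suppose for contradiction that $\mode = f\circ\hat\Gamma$ for some $\hat\Gamma\in\I_k(\P)$ with identification function $V:\R^k\times\R\to\R^k$ and link $f:\R^k\to\mathcal R$, and derive a contradiction by generalizing Lemma~\ref{lem:not-iden} from two bumps to $k+2$. Working inside $\subP$, take the bumps $\psi_{0,1},\psi_{4,1},\dots,\psi_{4(k+1),1}$, whose supports are pairwise disjoint, and for a weight vector $\lambda$ in the simplex $\Delta_{k+1}$ write $p_\lambda=\sum_{i=0}^{k+1}\lambda_i\psi_{4i,1}\in\P$. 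Since the supports are disjoint, $p_\lambda$ is unimodal exactly when $\argmax_i\lambda_i$ is unique, in which case $\mode(p_\lambda)=4\argmax_i\lambda_i$. Setting $v_i(r)=\E_{\psi_{4i,1}}V(r,Y)\in\R^k$ and using linearity of expectation over the mixture, identifiability gives the key correspondence $\hat\Gamma(p_\lambda)=r\iff\sum_{i}\lambda_i v_i(r)=0$, so the entire problem transfers to the weight simplex $\Delta_{k+1}$.

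The heart of the argument is geometric. For fixed $r$, the feasible weights $S_r=\{\lambda\in\Delta_{k+1}:\sum_i\lambda_iv_i(r)=0\}$ form an affine slice of the simplex cut out by $k$ linear equations, and the assumption $\mode=f\circ\hat\Gamma$ forces every such slice to lie within a single ``mode cell'' $C_{i}=\{\lambda:\lambda_i\ge\lambda_j\ \forall j\}$: two unimodal weight vectors in $S_r$ share the value $r$, hence the mode $f(r)$, hence the $\argmax$. I would contradict this near the barycenter $b=\tfrac{1}{k+2}\mathbf 1$, the unique point where all $k+2$ cells meet. Choose $\lambda^\star\in\mathrm{int}\,C_{i^\star}$ very close to $b$ and set $r=\hat\Gamma(p_{\lambda^\star})$, so $\mode(p_{\lambda^\star})=f(r)=4i^\star$. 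A dimension count shows the direction space $D=\{\mu:\sum_i\mu_iv_i(r)=0,\ \sum_i\mu_i=0\}$ of $S_r$ satisfies $\dim D\ge (k+2)-(k+1)=1$. On the other hand, the tangent cone of $C_{i^\star}$ at $b$ is $K=\{\mu:\sum_i\mu_i=0,\ \mu_{i^\star}\ge\mu_j\ \forall j\}$, which is \emph{pointed}: if $\mu$ and $-\mu$ both lay in $K$ then $\mu_{i^\star}=\mu_j$ for all $j$, forcing $\mu=0$. A pointed cone contains no line, hence no subspace of dimension $\ge 1$, so $D\not\subseteq K$; thus there is $\mu\in D$ and an index $j$ with $\mu_j>\mu_{i^\star}$.

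With such a $\mu$ in hand, I would move along the slice: the points $\lambda(t)=\lambda^\star+t\mu$ remain in $S_r$ (since $\mu\in D$), and for $\lambda^\star$ sufficiently close to $b$ one can pick a small $t>0$ keeping $\lambda(t)$ in $\Delta_{k+1}$ while making coordinate $j$ strictly exceed coordinate $i^\star$. Then $\argmax\lambda(t)\neq i^\star$, so $p_{\lambda(t)}$ is unimodal (after nudging $t$ to avoid the finitely many tie values) with mode different from $4i^\star$, yet $\hat\Gamma(p_{\lambda(t)})=r$ forces its mode to equal $f(r)=4i^\star$ --- the desired contradiction. Since $k$ was arbitrary, $\mode$ is not $k$-identifiable for any $k$, so its identification complexity, and therefore its identifiable elicitation complexity, is infinite.

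I expect the main obstacle to be the geometric core rather than any computation: recognizing that the level sets of $\hat\Gamma$ on bump mixtures are affine slices of the simplex, relating them to the mode cells, and isolating pointedness of the tangent cone at the barycenter as the exact reason a positive-dimensional slice cannot be trapped inside one cell. The remaining points --- the dimension bound on $D$, and the bookkeeping to keep $\lambda(t)$ inside the simplex and inside the interior of a genuinely unimodal cell --- should be routine once the barycenter construction is in place.
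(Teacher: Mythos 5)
Your overall architecture coincides with the paper's: reduce to showing the mode is not $k$-identifiable (identification complexity lower-bounds identifiable elicitation complexity), work inside $\subP$ with more disjoint bumps than $k$, use linearity of $\lambda\mapsto \E_{p_\lambda}V(r,Y)$ in the mixture weights to turn the level set of $\hat\Gamma$ into the kernel of a rank-deficient linear map, and perturb along a kernel direction to change the argmax while remaining in the level set. The paper executes the last step by an explicit case analysis on the signs of the kernel vector $h'$, exhibiting a nonempty interval of admissible scalings $\alpha$ (and it renormalizes by $\beta$ afterwards rather than staying on the probability simplex); your choice of weights near the barycenter plays exactly the role of the paper's condition $h_0>h_1>\cdots>h_t>h_0/2$, and your pointed-cone observation repackages that case analysis. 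So the route is the same in substance, with a cleaner geometric gloss.

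However, there is a genuine quantifier circularity in your final step as written. You must fix $\lambda^\star$ (hence its distance $\delta$ from $b$) \emph{before} you know $\mu$: the direction $\mu$ lives in $D$, which depends on $r=\hat\Gamma(p_{\lambda^\star})$, hence on $\lambda^\star$, so you cannot afterwards invoke ``for $\lambda^\star$ sufficiently close to $b$.'' The quantitative problem is real, because your $j$ is an arbitrary index with $\mu_j>\mu_{i^\star}$, and the gap $\mu_j-\mu_{i^\star}$ can be arbitrarily small relative to $\|\mu\|_\infty$ (e.g.\ when $\mu_{i^\star}$ is nearly maximal); then the time $t\approx(\lambda^\star_{i^\star}-\lambda^\star_j)/(\mu_j-\mu_{i^\star})$ needed to overturn the leading coordinate can exceed the time at which some coordinate of $\lambda(t)$ exits the simplex, and no fixed $\delta$ chosen in advance rescues this. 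The fix is short and in fact makes your pointedness step superfluous: take $j\in\argmax_l\mu_l$ and replace $\mu$ by $-\mu$ if necessary. Since $\sum_l\mu_l=0$ and $\mu\neq 0$, one has $(\max_l\mu_l-\mu_{i^\star})+(\mu_{i^\star}-\min_l\mu_l)=\max_l\mu_l-\min_l\mu_l\geq\|\mu\|_\infty$, so for one of the two signs the gap-closing rate is at least $\|\mu\|_\infty/2$. Normalizing $\|\mu\|_\infty=1$, any fixed $\delta<\tfrac{1}{5(k+2)}$ then works uniformly in $V$: a $t$ slightly larger than $4\delta$ makes coordinate $j$ exceed coordinate $i^\star$ while every coordinate stays above $\tfrac{1}{k+2}-5\delta>0$. (You should also choose the coordinates of $\lambda^\star$ pairwise distinct, so that each pair of coordinates of $\lambda(t)$ ties at most at one value of $t$; this is what actually justifies your ``finitely many tie values'' nudge.) This uniform inequality is precisely the analogue of the computation the paper carries out to show its $\alpha$-interval is nonempty, via $h_0-h_{i(\text{max})}<h_0/2<h_i$ and $|h_i'|\leq h_{i(\text{max})}'$.
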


We briefly outline the proof of Theorem~\ref{thm:bounded-density}; the full proof appears in Appendix~\ref{app:proofs}.
Let $V$ be an identification function, which identifies some intermediate property $\hat\Gamma:\P\to\reals^k$ for some finite dimension $k$.
Taking $t>k$, we construct a probability density $p\in \subP$ with bump heights specified by a vector $h\in\reals^{t+1}_+$, chosen so that the gap between any two bump heights is smaller than the minimum height.
We observe that the expected value of $V$ is linear in the bump heights, and moreover this linear transformation is rank deficient, giving us a nontrivial vector $h'\in\reals^{t+1}$ in its kernel.
By our initial choice of $h$, for any such $h'$ we can find a suitable choice of coefficient $\alpha \in \reals$ so that $h + \alpha h' \in \reals^{t+1}_+$ while changing the mode.
After normalization, this gives us a valid density $p'\in\subP$ yielding zero expectation of $V$, and thus residing in the same level set of $\hat\Gamma$ as $p$, yet with a different mode.
This contradicts the existence of a function $f$ satisfying $\mode=f\circ \hat{\Gamma}$, as $f$ would need to map the same $\hat\Gamma$ value to two different $\mode$ values\@.
Figure~\ref{fig2} illustrates this construction, showing the density $p$ along with a hypothetical choice of $p'$\@.

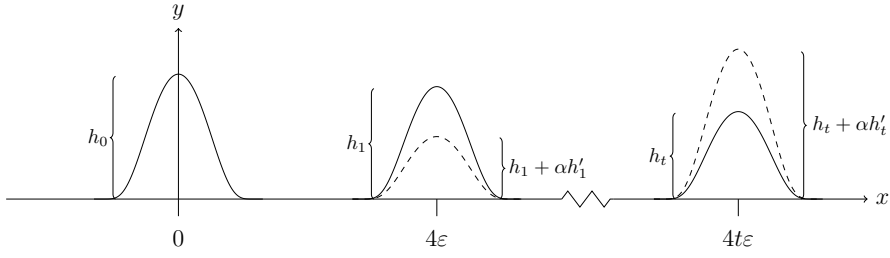
\begin{figure}
  \centering
  \resizebox {\textwidth} {!} {
    \begin{tikzpicture}
      \draw[->] (0,0) -- (0,1) node [above]
      {$y$};
      \begin{scope}[smooth,draw=black,y=0.3989422804cm]
        \draw[black] plot[id=f7,domain=-0.5:0.5,samples=100]
        function {100*exp(-1/(0.25-x*x))};
        \draw [decorate,decoration={brace},xshift=-1pt,yshift=0pt]
        (-0.33,0) -- (-0.33,1.8) node [black,midway,xshift=-0.3cm] 
        {\footnotesize $h_0$};
        
        \draw[black] plot[id=f7,domain=1:2,samples=100]
        function {90*exp(-1/(0.25-(x-1.5)*(x-1.5)))};
        \draw [decorate,decoration={brace},xshift=-1pt,yshift=0pt]
        (1.17,0) -- (1.17,1.62) node [black,midway,xshift=-0.3cm] 
        {\footnotesize $h_1$};
        \draw[black,dashed] plot[id=f7,domain=1:2,samples=100]
        function {50*exp(-1/(0.25-(x-1.5)*(x-1.5)))};
        \draw [decorate,decoration={brace,mirror},xshift=-1pt,yshift=0pt]
        (1.9,0) -- (1.9,0.9) node [black,midway, xshift=0.85cm]
        {\footnotesize $h_1+\alpha h_1'$};
        
        \draw[black] plot[id=f7,domain=2.75:3.75,samples=100]
        function {70*exp(-1/(0.25-(x-3.25)*(x-3.25)))};
        \draw [decorate,decoration={brace},xshift=-1pt,yshift=0pt]
        (2.92,0) -- (2.92,1.26) node [black,midway,xshift=-0.3cm] 
        {\footnotesize $h_t$};
        \draw[black,dashed] plot[id=f7,domain=2.75:3.75,samples=100]
        function {120*exp(-1/(0.25-(x-3.25)*(x-3.25)))};
        \draw [decorate,decoration={brace,mirror},xshift=-1pt,yshift=0pt]
        (3.65,0) -- (3.65,2.16) node [black,midway,xshift=0.85cm]
        {\footnotesize $h_t+\alpha h_t'$};
      \end{scope}
      \draw[-] (-1,0) -- (0,0) node [right] {};
      \discontarrow(0,0)(2.225,0)(2.525,0)(4,0);
      \draw[.](4,0) node [right]{$x$};
      \foreach \pos/\label in {0/$0$,
        1.5/$4\varepsilon$,3.25/$4t\varepsilon$}
      \draw (\pos,0) -- (\pos,-0.1) (\pos cm,-1.5ex) node
      [anchor=base,fill=white,inner sep=1pt]  {\label};
    \end{tikzpicture}
  }
  \caption{The initial density $p$ of Theorem~\ref{thm:bounded-density} depicted with a solid line, and alternate density $p'$ (before normalization) with a dashed.  Here $t > k$, where $k$ is the dimension of the intermediate property $\hat\Gamma$.}
  \label{fig2}
\end{figure}

The impossibility result of Theorem~\ref{thm:bounded-density} is strengthened in Theorem~\ref{thm:gaussians}, which shows that the mode has infinite identifiable elicitation complexity even after restricting to the family $\QQ$ of probability measures in $\P$ which can be expressed as a mixture of Gaussians\@.
While the general outline of the proof is similar, the bump functions used in Theorem~\ref{thm:bounded-density} were supported on disjoint intervals, which is clearly not true of Gaussians\@.
In particular, changing the heights of distant Gaussians will now alter the mode\@.

\begin{theorem} \label{thm:gaussians}
  The mode, $\mode:\QQ\to \mathcal{R}$, has infinite identifiable elicitation complexity with respect to $\QQ$, the class of probability measures in $\P$ which can be expressed as a mixture of Gaussians\@.
\end{theorem}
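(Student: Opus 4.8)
The plan is to follow the outline of Theorem~\ref{thm:bounded-density}, replacing the disjoint bumps by well-separated unit-variance Gaussians and supplying a quantitative estimate that pins the global mode of a Gaussian mixture near the center of its heaviest component. Since identification complexity lower bounds identifiable elicitation complexity, it suffices to show that for every $k\in\N$ the mode is not $k$-identifiable over $\QQ$. So I fix $k$ and suppose, for contradiction, that some $\hat\Gamma\in\I_k(\QQ)$ is identified by $V$ and that $\mode=f\circ\hat\Gamma$. I fix $t>k$, place centers $\mu_i=iD$ for $i=0,\dots,t$ with separation $D$ to be chosen, write $\phi_{\mu_i}$ for the unit-variance Gaussian density at $\mu_i$, and consider mixtures $p_h=\sum_{i=0}^t h_i\phi_{\mu_i}$ with weights $h\in\R^{t+1}_+$ summing to $1$. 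As in the bump construction I take $h$ with distinct entries whose pairwise gaps $|h_i-h_j|$ are all strictly smaller than $\min_i h_i$.

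The identification step transfers unchanged. With $r=\hat\Gamma(p_h)$, linearity gives $\E_{p_h}V(r,Y)=\sum_i h_i\,\E_{\phi_{\mu_i}}V(r,Y)=M(r)h$ for the matrix $M(r)\in\R^{k\times(t+1)}$ with entries $M(r)_{ji}=\E_{\phi_{\mu_i}}V_j(r,Y)$. Identifiability forces $M(r)h=0$, so $h\in\ker M(r)$; since $t>k$, this kernel has dimension at least $t+1-k\ge2$, and I may select $h'\in\ker M(r)$ linearly independent from $h$. Putting $p'=\sum_i h'_i\phi_{\mu_i}$, every $p_h+\alpha p'$ satisfies $\E_{p_h+\alpha p'}V(r,Y)=M(r)(h+\alpha h')=0$. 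Dividing by the positive total mass $1+\alpha\sum_i h'_i$ leaves both the zero of $V$ and the location of the global maximum intact, so for small $\alpha$ the normalized density $\tilde p_\alpha$ is a Gaussian mixture with $\hat\Gamma(\tilde p_\alpha)=r$; provided its tallest peak is unique it lies in $\QQ$. The goal is then to produce an admissible $\alpha$ with $\mode(\tilde p_\alpha)\neq\mode(p_h)$, contradicting $\mode=f\circ\hat\Gamma$.

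The one genuinely new ingredient is locating the mode of an overlapping mixture. I would prove a separation estimate: for weights summing to $1$, $g=\sum_i h_i\phi_{\mu_i}$ has a single local maximum $x_j$ near each center $\mu_j$ with $x_j\to\mu_j$ as $D\to\infty$, and peak value $g(x_j)=\tfrac{1}{\sqrt{2\pi}}(h_j+\eta_j)$ where $|\eta_j|\le\delta(D)$ for some $\delta(D)\to0$ as $D\to\infty$, uniformly over the simplex. This follows by differentiating $g$ near $\mu_j$, where $-h_j(x-\mu_j)\phi_{\mu_j}(x)$ dominates and every other summand is bounded by the tail $\phi(D)$ times the total mass; the global maximum is attained at whichever $x_j$ maximizes $h_j+\eta_j$. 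Thus if one weight exceeds all others by more than $2\delta(D)$, the mode sits within $o(1)$ of that center, at distance at least $D-o(1)$ from every other center. This is the precise sense in which, as noted before the theorem, even distant Gaussians perturb the mode, yet only through the uniformly small term $\eta_j$.

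It remains to combine the two pieces, and this is where I expect the real difficulty. As in Theorem~\ref{thm:bounded-density}, the gap condition on $h$ guarantees a coefficient $\alpha$ with $h+\alpha h'\in\R^{t+1}_+$ for which a component other than $\argmax_i h_i$ becomes the strict maximizer; the subtlety here is that I need this new maximizer to win not merely at a tie but by a fixed margin exceeding $2\delta(D)$, so that the separation estimate relocates the mode macroscopically. The hard part is ensuring that this margin and the error $\delta(D)$ are compatible: the kernel vector $h'$ depends on $D$ through $M(r)$, so I cannot simply fix the margin and then enlarge $D$. I would resolve this by fixing a definite gap structure for $h$ (bounded gaps, bounded-below minimum) first, using the at-least-two-dimensional kernel to select an $h'$ whose reordering budget within the positive orthant is bounded below, and then exploiting the exponential decay of $\delta(D)$ to choose $D$ large enough that the attainable margin beats $2\delta(D)$. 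With such an $\alpha$, the mode of $\tilde p_\alpha$ lies near a different center than that of $p_h$, yet both share the value $r=\hat\Gamma$, forcing $f(r)$ to equal two distinct modes — the desired contradiction.
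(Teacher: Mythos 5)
Your skeleton matches the paper's: reduce to identification complexity, build a mixture of well-separated unit-variance Gaussians, extract a kernel vector of the matrix of component-wise expectations of $V$, and control the mode via a quantitative overlap bound (your $\delta(D)$ plays exactly the role of the paper's $\gamma$, and your value-based location argument substitutes acceptably for the paper's second-derivative confinement of the mode to $\cup_i B_\sigma(x_i)$). The gap is at the crux you yourself flag — producing an admissible $\alpha$ that relocates the mode — and it is genuine in two respects. First, your matrix $M(r)$ includes all $t+1$ components, so its kernel contains $h$ itself, and ``linearly independent from $h$'' is not a sufficient selection criterion: if $h'$ is a small kernel perturbation of $h$ (say $h'=h+\epsilon v$ with $\epsilon$ tiny), then $h+\alpha h'$, after the renormalization you perform, remains close to the ray through $h$ over the entire range of $\alpha$ permitted by positivity, and the argmax component never changes — the ``reordering budget bounded below'' is asserted, never constructed. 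The paper avoids this structurally: its $M$ is $k\times t$, omitting the column $\E_{q_0}V(r,Y)$, so the perturbation touches only components $1,\dots,t$ while $h_0$ is pinned; consequently \emph{every} nontrivial kernel vector, after a sign flip and suitable scaling of $\alpha$, pushes some $h_i+\alpha h_i'$ with $i\geq 1$ above $h_0$ by the required $\gamma$-margin. Your setup admits the same repair — intersect $\ker M(r)$ with $\{v: v_0=0\}$, which has dimension at least $t-k\geq 1$ — but as written the argument can fail.

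Second, the circularity you identify ($h'$ depends on $D$) is real, but your proposed resolution — fix $h$, select $h'$, then take $D$ large so that the margin beats $2\delta(D)$ — cannot be executed in that order: $r=\hat\Gamma(p_h)$, hence $M(r)$, hence $h'$, all depend on $D$, so choosing $D$ last invalidates the $h'$ already chosen. The paper breaks the circle in the opposite direction: it fixes the separation $C$, hence $\gamma$, at the outset (with $\gamma<\tfrac{1}{4(t+1)}$), builds $h$ to satisfy the $\gamma$-dependent condition \eqref{eq:h-condition} — note that your condition, pairwise gaps below the minimum weight, is the one from Theorem~\ref{thm:bounded-density} and lacks the needed $\gamma$ slack ($h_0-\gamma>h_1$ and $h_t>\tfrac34 h_0$) — and then verifies by explicit chained inequalities that for \emph{every} kernel vector $h'$ a valid $\alpha$ exists, the key point being that both the lower threshold on $\alpha$ and the positivity cap $\min_{i:h_i'<0} h_i/|h_i'|$ scale inversely with $h'$, so the comparison is invariant under rescaling $h'$ and uniformity over $h'$ comes for free, with no need to revisit $D$. (A smaller issue: your separation lemma as stated — a local maximum near each center, uniformly over the simplex — fails as some $h_j\to 0$; the paper's Claims 1--3 are phrased as peak-value and location bounds precisely to avoid asserting such existence.) Until the selection of $h'$ and the order of quantifiers are fixed along these lines, the final contradiction does not go through.
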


See Appendix~\ref{app:proofs} for the proof, which shows that the statement holds even when the mixture is over Gaussians with the same variance.
As in Theorem~\ref{thm:bounded-density}, we assume $\mode = f \circ \hat\Gamma$ for some finite-dimensional identifiable $\hat\Gamma$, construct an initial density $q\in\QQ$, and show that there must exist another $q'\in\QQ$ in the same level set as $q$ but with a different mode (see Figure~\ref{fig3}).
While the broad outline remains the same, several technical challenges arise from the overlapping supports of Gaussians.
To address these issues, we bound the potential contribution of one Gaussian to the density value at another to show that the mode changes from $q$ to $q'$, and use these bounds again to set the height vector $h$ so that a coefficient $\alpha$ still exists for all possible vectors $h'$.

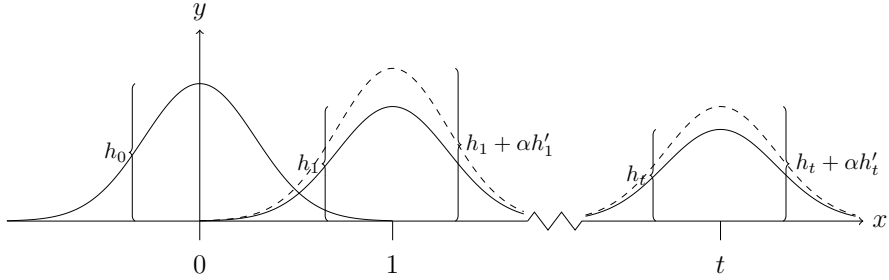
\begin{figure}
  \centering
  \resizebox {\textwidth} {!} {
    \begin{tikzpicture}
      \def\myprecision{6}
      \draw[->] (0,0) -- (0,1) node [above]
      {$y$};
      \begin{scope}[smooth,draw=black,y=0.3989422804cm]
        \draw[black] plot[id=f7,domain=-1:1,samples=100]
        function {1.8*exp(-\myprecision*x*x)};
        \draw [decorate,decoration={brace},xshift=-1pt,yshift=0pt]
        (-0.3,0) -- (-0.3,1.8) node [black,midway,xshift=-0.3cm] 
        {\footnotesize $h_0$};
        
        \draw[black] plot[id=f7,domain=0:1.68,samples=100]
        function {1.5*exp(-\myprecision*(x-1)*(x-1))};
        \draw [decorate,decoration={brace},xshift=-1pt,yshift=0pt]
        (0.7,0) -- (0.7,1.5) node [black,midway,xshift=-0.3cm] 
        {\footnotesize $h_1$};
        \draw[black,dashed] plot[id=f7,domain=0:1.68,samples=100]
        function {2*exp(-\myprecision*(x-1)*(x-1))};
        \draw [decorate,decoration={brace,mirror},xshift=-1pt,yshift=0pt]
        (1.36,0) -- (1.36,2) node [black,midway, xshift=0.85cm]
        {\footnotesize $h_1+\alpha h_1'$};
        
        \draw[black] plot[id=f7,domain=2:3.4,samples=100]
        function {1.2*exp(-\myprecision*(x-2.7)*(x-2.7))};
        \draw [decorate,decoration={brace},xshift=-1pt,yshift=0pt]
        (2.4,0) -- (2.4,1.2) node [black,midway,xshift=-0.3cm] 
        {\footnotesize $h_t$};
        \draw[black,dashed] plot[id=f7,domain=2:3.4,samples=100]
        function {1.5*exp(-\myprecision*(x-2.7)*(x-2.7))};
        \draw [decorate,decoration={brace,mirror},xshift=-1pt,yshift=0pt]
        (3.06,0) -- (3.06,1.5) node [black,midway,xshift=0.85cm]
        {\footnotesize $h_t+\alpha h_t'$};
      \end{scope}
      \draw[-] (-1,0) -- (0,0) node [right] {};
      \discontarrow(0,0)(1.7,0)(2,0)(3.44,0);
      \draw[.](3.44,0) node [right]{$x$};
      \foreach \pos/\label in {0/$0$,
        1/$1$,2.7/$t$}
      \draw (\pos,0) -- (\pos,-0.1) (\pos cm,-1.5ex) node
      [anchor=base,fill=white,inner sep=1pt]  {\label};
    \end{tikzpicture}
  }
  \caption{ 
    The initial density $q$ in Theorem~\ref{thm:gaussians} depicted as a mixture of Gaussians with solid lines, and the alternate density $q'$ with dashed, before normalization.}\label{fig3}
\end{figure}

\section{Implications for the Modal Interval}\label{sec:modal-interval}

While the mode is not elicitable, it is well-known that the midpoint of the modal interval $\Gamma_\varepsilon$ defined in eq.~\eqref{eq:modal-midpoint}, which we will refer to as the \emph{modal midpoint}, is elicitable, via the simple loss function $L_\varepsilon(r,y)=\mathds{1}\{|r-y|>\varepsilon\}$.
(Note that as we restrict to single-valued functionals in this paper, in the technical results that follow, we will only consider distributions with a unique solution to eq.~\eqref{eq:modal-midpoint}.)
Recalling that the mode is the limit of the modal midpoint $\Gamma_\varepsilon$ as the radius $\varepsilon$ approaches $0$, it is often suggested to estimate the mode by $\Gamma_\varepsilon$ for a sufficiently small $\varepsilon$.
Heinrich~\citeyearpar{heinrich} argues that this practice is ill-advised, given the non-elicitability of the mode, and further demonstrates this argument empirically.
For a particular Gaussian mixture with density $\heinrichp$, with two local maxima $m_0$ and $m_1$, $\heinrichp(m_0) < \heinrichp(m_1)$, Heinrich shows that given a fixed number of samples, even small values of $\varepsilon$ result in a modal midpoint $\hat x_\varepsilon$ which is more often closer to $m_0$ than the mode $m_1$.
More precisely, for $\varepsilon \in \{0.5, 0.25, 0.1, 0.05, 0.025, 0.001\}$, out of 1000 trials each, Heinrich finds that $|\hat x_\varepsilon - m_1| < |\hat x_\varepsilon - m_0|$ in no more than 438 trials.
Moreover, this success rate drops when $\varepsilon < 0.1$.

\begin{figure}
  \centerline{\includegraphics[width=1.2\textwidth,clip]{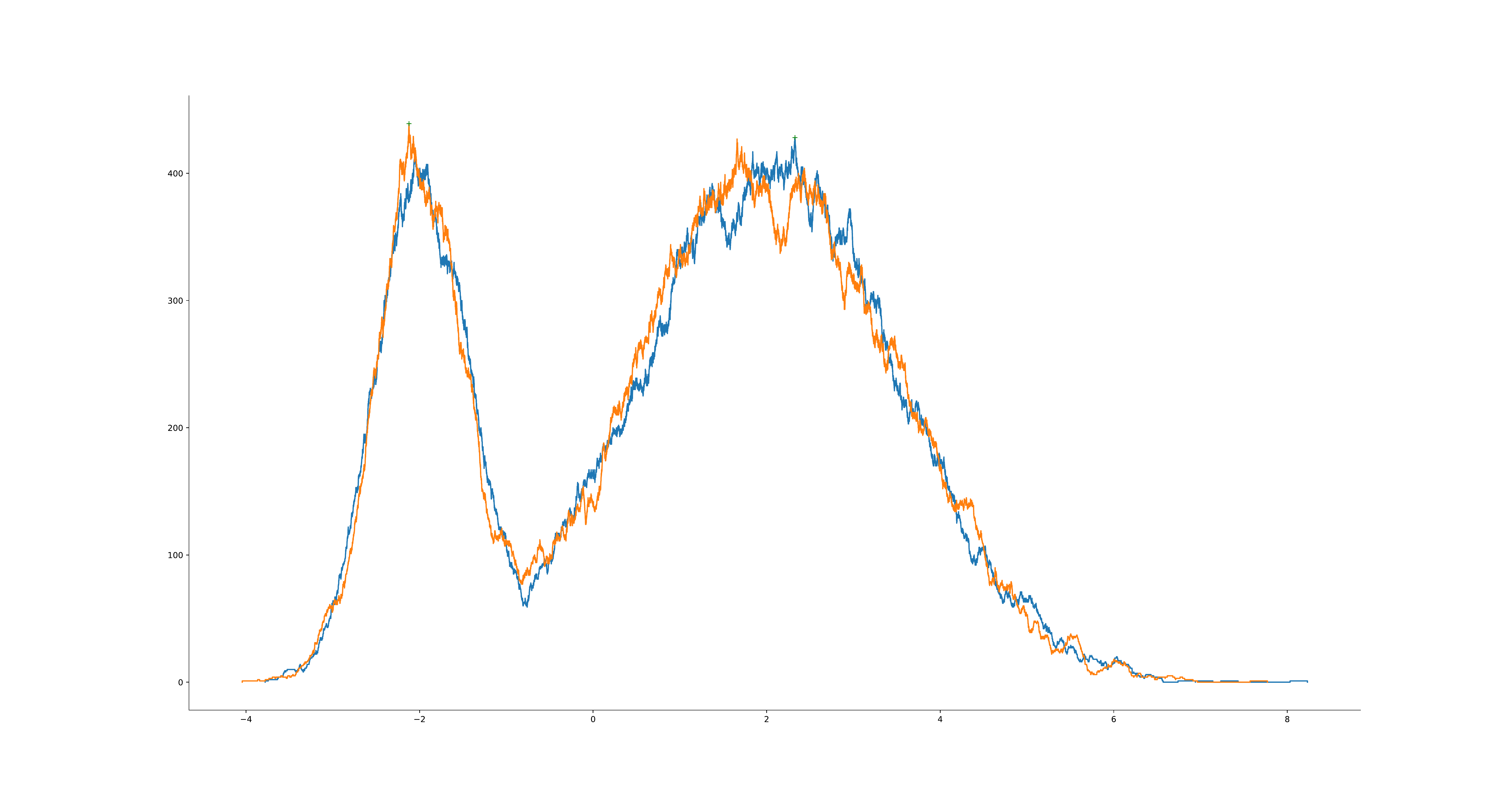}\vspace*{-20pt}}
  \caption{%
    The number $c(x)$ of the $n=10,000$ data points within $\varepsilon=0.1$ of a given point $x$, plotted here for two typical samples drawn independently from density $\heinrichp$.
    The empirical loss $\hat L(r) = \tfrac 1 n \sum_{i=1}^n L_\varepsilon(r,y_i)$ can be written $\hat L(r) = 1 - \tfrac 1 n c(r)$, and thus the maximum of each $c(\cdot)$, marked `+', corresponds to the estimated modal midpoint.
    In orange, the estimated mode and modal interval are both reasonably accurate, and in blue, neither are accurate.}
  \label{fig:modal-interval}
\end{figure}

Yet we observe that, as the midpoint $x_\varepsilon=\Gamma_\varepsilon(\heinrichp)$ of the true modal interval is very close to $m_1$ for sufficiently small $\varepsilon$, this simulation study also shows that the sample modal midpoint $\hat x_\varepsilon$ is an ineffective estimate of the true modal midpoint $x_\varepsilon$.
When $m_1$ is replaced by $x_\varepsilon$ in the preceding paragraph, we obtain qualitatively similar results: the majority of the time, $\hat x_\varepsilon$ is closer to $m_0$ than the true modal midpoint $x_\varepsilon$, and the situation worsens for $\varepsilon < 0.1$.
(See Figure~\ref{fig:modal-interval} and Appendix~\ref{app:experiments} for details.)
In summary, not only does the modal midpoint fail to estimate the mode, it fails to estimate the modal midpoint.

These empirical findings suggest the difficulty of eliciting modal midpoints in practice, despite the fact that they are elicitable.
This sentiment is confirmed by the following Corollary, which extends our argument on the elicitation complexity of the mode to modal midpoints.
The result essentially follows from the following observation.
For a distribution consisting of disjoint bump functions in $\P_{\psi, \varepsilon}$, as defined after eq.~\eqref{eq:bump} and used in the argument of Theorem~\ref{thm:bounded-density}, the mode and modal midpoint $\Gamma_\varepsilon$ coincide.
While this equivalence does not hold anymore for mixtures of Gaussians, we remark that the proof of Theorem~\ref{thm:gaussians} could be directly modified, by enlarging the width of the balls to $B_{2\sigma}(x_i)$, and choosing sufficiently small $\gamma$ and large $C$, so that the same logic would hold for the modal interval when $\varepsilon$ is sufficiently small. 

\begin{corollary}\label{cor:modal} For any $\varepsilon>0$, the modal midpoint, $\Gamma_\varepsilon:\P_\varepsilon\to \mathcal{R}$, has infinite identifiable elicitation complexity with respect to $\P_\varepsilon$, the class of probability measures defined on the real line which admit a smooth and bounded density, and have a unique mode and $\varepsilon$-modal midpoint\@.
\end{corollary}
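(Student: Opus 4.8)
The plan is to deduce the corollary from Theorem~\ref{thm:bounded-density} by showing that, on the disjoint bump mixtures of $\subP$, the modal midpoint $\Gamma_\varepsilon$ coincides with the mode. Fix $\varepsilon>0$. Recall that $\subP$ consists of finite mixtures $p=\sum_t w_t\,\psi_{4t\varepsilon,\varepsilon}$ of bumps of width $2\varepsilon$ whose centers are spaced $4\varepsilon$ apart, so that consecutive supports are separated by gaps of width $2\varepsilon$, and that each such $p$ is unimodal by definition of $\subP\subset\P$. First I would establish two facts: that $\subP\subseteq\P_\varepsilon$, and that $\Gamma_\varepsilon(p)=\mode(p)$ for every $p\in\subP$. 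Granting these, the construction of Theorem~\ref{thm:bounded-density} supplies the contradiction.

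For both facts it suffices to analyze the captured mass $g(x)=F(x+\varepsilon)-\lim_{z\uparrow x-\varepsilon}F(z)$ maximized in eq.~\eqref{eq:modal-midpoint}. Any window $[x-\varepsilon,x+\varepsilon]$ of width $2\varepsilon$ meets the support of at most one bump, since overlapping both bump $t$ and bump $t+1$ would require $x-\varepsilon<4t\varepsilon+\varepsilon$ and $x+\varepsilon>4t\varepsilon+3\varepsilon$ simultaneously, which is impossible. Thus $g(x)$ never exceeds the weight $w_t$ of the single bump the window meets, and since each bump is supported on an interval of width exactly $2\varepsilon$, this bound is attained only when the window coincides with that bump's support, i.e.\ $x=4t\varepsilon$, where $g(4t\varepsilon)=w_t$. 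As $p$ is unimodal the heaviest bump is unique, so $g$ has the unique maximizer $x=4t^*\varepsilon$ with $t^*=\argmax_t w_t$. Hence the modal midpoint exists and is unique, giving $\subP\subseteq\P_\varepsilon$, and it equals the center of the tallest bump, which is exactly $\mode(p)$; therefore $\Gamma_\varepsilon=\mode$ on $\subP$.

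To finish, recall that identification complexity lower bounds identifiable elicitation complexity, so it suffices to rule out finite identification complexity. Suppose for contradiction that $\Gamma_\varepsilon=f\circ\hat\Gamma$ for some $\hat\Gamma\in\I_k(\P_\varepsilon)$ with identification function $V$. Since $\subP\subseteq\P_\varepsilon$, the function $V$ identifies $\hat\Gamma$ on $\subP$, so the construction in the proof of Theorem~\ref{thm:bounded-density} applies verbatim: taking $t>k$, it produces $p,p'\in\subP$ in the same level set of $\hat\Gamma$, so that $\hat\Gamma(p)=\hat\Gamma(p')$, yet with $\mode(p)\neq\mode(p')$. Because $p,p'\in\subP\subseteq\P_\varepsilon$, the equivalence of the previous paragraph gives $\Gamma_\varepsilon(p)=\mode(p)\neq\mode(p')=\Gamma_\varepsilon(p')$, contradicting $\Gamma_\varepsilon=f\circ\hat\Gamma$. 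Thus $\Gamma_\varepsilon$ has infinite identification complexity, and hence infinite identifiable elicitation complexity, with respect to $\P_\varepsilon$.

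The main obstacle is the uniqueness argument in the second paragraph: one must verify not merely that the heaviest bump's center maximizes $g$, but that it is the \emph{strict} maximizer, carefully using that the bump density decays smoothly to zero at the endpoints $\pm\varepsilon$ so that any nonzero shift of the window removes a positive amount of mass while the newly included interval falls in a gap and contributes nothing. The remaining point deserving attention is confirming that the densities delivered by the Theorem~\ref{thm:bounded-density} construction indeed lie in $\subP$, and hence in $\P_\varepsilon$, which holds because that construction keeps each density unimodal with a well-defined, distinct mode.
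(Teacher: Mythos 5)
Your proof is correct and follows essentially the same route as the paper's: establish that $\Gamma_\varepsilon$ and $\mode$ coincide on $\subP\subseteq\P_\varepsilon$ (an interval of width $2\varepsilon$ meets at most one bump, and the heaviest bump's center uniquely maximizes the captured mass), then transfer the infinite lower bound from the construction in Theorem~\ref{thm:bounded-density}. If anything, you make explicit two points the paper's proof leaves implicit — the strictness of the maximizer of the captured-mass function and the verification that $\subP\subseteq\P_\varepsilon$ (including that the perturbed density $p'$ stays in $\subP$) — so no gap remains.
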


\begin{proof}
Let $\varepsilon>0$ be given\@.
Observe that for any $p\in\subP$, the disjoint bump functions comprising $p$ are spaced far enough apart so that an interval of width $2\varepsilon$ can only intersect the support of at most one bump function\@.
Moreover, if the interval intersects the $i$th such function, it maximizes the contained mass by centering the interval at exactly $4\varepsilon i$\@.
The global maximum mass is therefore achieved by centering the interval to capture the mass of the bump function with the largest weight, whose midpoint coincides with the mode\@.
From these observations, we conclude $\mode(p)=\Gamma_\varepsilon(p)$ for all $p\in\subP$\@.
In other words, $\Gamma_\varepsilon$ and $\mode$ are the same functional with respect to $\subP\subseteq \P_\varepsilon$\@.
Hence, the identification complexity and identifiable elicitation complexity of the modal midpoint $\Gamma_\varepsilon$ with respect to $\P_\varepsilon$ are at least that of the mode\@.
\end{proof}

The fact that modal midpoints are elicitable yet have infinite identifiable elicitation complexity illustrates the subtlety of our definitions.
This subtlety is important;
as pointed out by~\citet{frongillo_kash}, one can construct pathological yet elicitable properties, such as a bijective $\Gamma:\Delta(\Y)\to[0,1]$ for finite~$\Y$ via any strictly proper scoring rule~\citep{gneiting2007strictly}.
Hence, the restriction to identifiable intermediate properties, or some other class of properties ruling out such pathologies (see Section~\ref{dis}), is necessary for practical estimation schemes.
In this light, our results are in line with the observation that both the mode and modal midpoint fail to be continuous in even weak senses: for certain distributions $p_1,p_2$, $\mode(\lambda p_1 + (1-\lambda) p_2)$ is not continuous in $\lambda$, and the same is true of $\Gamma_\varepsilon$.

To close, it is interesting to contrast the above demonstration and negative result with the existing positive results in the literature on the estimation of the mode and modal midpoints.
Some positive results, showing favorable error bounds, assume that the true density is not only unimodal but has a unique local maximum, i.e., the density increases before the mode and decreases afterwards; see for example~\citet[Section 2]{robertson1974iterative} and \citet[Assumption 2]{lee1989mode}.
Moreover, many proposed estimators are expressed as sequences of estimators which depend on the sample size~\citep{parzen1962estimation,chernoff1964estimation,grenander1965some,venter1967estimation}; we may roughly view these estimators as intermediate properties of countably infinite dimension, consistent with our results.

\section{Discussion}\label{dis}

Several interesting open questions remain\@.
One could further ask for the identifiable elicitation complexity of the mode with respect to other classes of probability distributions.
One interesting class would be distributions with densities having a unique local maximum, though note that the elicitability of the mode is still open in this case\@.
The method of perturbing the heights of (in this case, heavily overlapping) bumps as in Lemma~\ref{lem:not-iden} and Theorems~\ref{thm:bounded-density} and~\ref{thm:gaussians} does not seem sufficient for this class\@.

Another set of questions arises when stepping away from the class of identifiable properties and considering other classes, such as  weakly identifiable properties; negative results with respect to this class would show infinite complexity with respect to continuous, non-locally-constant, component-wise elicitable properties~\citep{lambert18,steinwart}\@.
Another interesting class of properties in this context would be those elicited by convex loss functions, as these properties are of practical interest yet need not be identifiable~\citep{frongillo_kash}\@.
Finally, we suspect that our techniques could be applied to other properties whose elicitation complexity is not known, such as the width of the smallest confidence interval\@.
\appendix
\section{Omitted Proofs}\label{app:proofs}

\begin{proof}[of Theorem~\ref{thm:bounded-density}]
Let $\varepsilon>0$ be given\@.
Since the identification complexity lower bounds the identifiable elicitation complexity of the mode it suffices to show that the mode is not $k$-identifiable for arbitrary $k\in \N$\@.
Suppose, by way of contradiction, that the mode is $k$-identifiable\@.
Hence, there exists a property $\hat{\Gamma}:\P\to \hat{\mathcal{R}}\subseteq \R^k$ identified by $V:\hat{\mathcal{R}}\times \R\to \R^k$ and function $f:\hat{\mathcal{R}}\to\mathcal{R}$ such that $\mode=f\circ \hat{\Gamma}$\@.
Our goal will be to specify two densities $p,p'\in\subP\subseteq \P$ with $\hat{\Gamma}(p)=\hat{\Gamma}(p')$ and $\mode(p)\neq \mode(p')$, contradicting the existence of $f$\@.

Let $t>k$ and consider the following density $p=\sum_{i=0}^t h_i\psi_{4i\varepsilon,\varepsilon}$ in $\subP$ with strictly decreasing heights $h_0>h_1>\dots h_t>h_0/2>0$ and $\sum_{i=0}^t h_i=1$\@.
Observe that $\mode(p)=0$ and denote $\hat{\Gamma}(p)=r$\@.
Consider the $k\times t$ matrix
\begin{equation}
    M= \begin{bmatrix} E_{\psi_{4\varepsilon,\varepsilon}}(V(r, Y)), \dots, E_{\psi_{4t\varepsilon,\varepsilon}}(V(r, Y))\end{bmatrix}.
\end{equation}
Let ${h}'=(h_1',\dots, h_t')$ denote a nontrivial vector in the kernel of $M$\@.
To complete the proof, we will demonstrate that for any ${h}'$ there exist real numbers $\alpha,\beta\in\R$ so that $p'=\beta\left(p+\alpha\left(\sum_{i=1}^t h_i' \psi_{4i\varepsilon,\varepsilon}\right)\right)$ is a density satisfying $\hat{\Gamma}(p')=r$ and $\Gamma(p')\neq 0$\@.
We proceed by considering all cases of ${h}'$ and showing the existence of $\alpha$ in each case\@.

First, considering $h_1',\dots ,h_t'\geq0$, let $h_{i(\text{max})}'$ denote the entry of ${h}'$ with greatest magnitude (if not unique, choose the entry associated with the maximal initial height $h_{i(\text{max})}$), and take $\alpha>(h_0-h_{i(\text{max})})/h_{i(\text{max})}'$\@.
Second, if $h_1', \dots, h_t'\leq 0$, then take $-{h}'$ and treat as above\@.
In the final case, at least one pair of entries of ${h}'$ have opposite sign\@.
Let $h_{i(\text{max})}'$ denote an entry of ${h}'$ with the greatest magnitude (if not unique, choose the entry associated with the maximal initial height $h_{i(\text{max})}$) and assume $h_{i(\text{max})}'>0$; otherwise take $-{h}'$\@.
Choose $\alpha$ such that $ (h_0-h_{i(\text{max})})/h_{i(\text{max})}'<\alpha \leq \min_{\{i: h_i'<0\}} h_i/|h_i'|$ satisfying $\alpha\neq |h_{i(\text{max})}-h_i|/(h_{i(\text{max})}'-h_i')$ for any $i$ with $h_{i(\text{max})}'>h_i'>0$\@.
Note this interval is nonempty because $h_0-h_{i(\text{max})}<\frac{h_0}{2}<h_i$ and $|h_i'|<h_{i(\text{max})}'$ for all $i$ such that $h_i'<0$\@.
In each of the above cases, there are finitely many $\alpha$ which do not yield a unimodal $p'$\@.
If the $\alpha$ chosen yields a $p'$ which is not unimodal, then discard this particular $\alpha$ from the interval and choose again\@.

With the appropriate normalization constant $\beta$, we now have a density given by $p'=\beta\left(p+\alpha\left(\sum_{i=1}^t h_i' \psi_{4i\varepsilon,\varepsilon}\right)\right)$\@.
As $h'$ is contained in the kernel of $M$, linearity of expectation and the definition of $V$ now guarantee that $\hat{\Gamma}(p')=\hat{\Gamma}(p)=r$, and the method with which we showed $\alpha$ exists ensures that $p'$ is unimodal with $\mode(p')\neq \mode(p)=0$\@.
These two statements together contradict the existence of the link function $f$ satisfying $\mode=f\circ \hat{\Gamma}$\@.
\end{proof}

\begin{proof}[of Theorem~\ref{thm:gaussians}]
As in the proof of Theorem~\ref{thm:bounded-density}, we assume the mode is $k$-identifiable and arrive at a contradiction\@.
Hence, we assume there exists a property $\hat{\Gamma}:\QQ\to \hat{\mathcal{R}}\subseteq \R^k$ identified by $V:\hat{\mathcal{R}}\times \R\to \R^k$ and function $f:\hat{\mathcal{R}}\to\mathcal{R}$ such that $\mode=f\circ \hat{\Gamma}$\@.
We will again specify two densities from $\QQ$ in the same level set of $\hat{\Gamma}$, but different modes which contradicts the existence of $f$\@.

Let $t>k$, and let $q_0,q_1,\dots, q_t$ be Gaussian densities with unit height ($\sigma^2=\frac{1}{2\pi}$) centered at $x_i=Ci$ for some $C$ to be determined.
For any mixture parameters $h=(h_0,h_1,\dots, h_t)\in\reals^{t+1}_+$, we will denote the Gaussian mixture density as follows,
\begin{equation*}
q[h](x)=\sum_{i=0}^t h_i q_i(x)\in\QQ'~,
\end{equation*}
where we define $\QQ'$ to be all positive scalings of densities in $\QQ$.
As we are interested in the mode, we can always renormalize to obtain a distribution in $\QQ$ with the same mode.
In the following, we extend $\mode(p)$ for unnormalized densities in the natural way.

Observe that for any mixture $h$, we have $\mode(q[h]) \in \cup_{i=0}^t B_{\sigma}(x_i)$, for any $C > 0$.
This follows from second-order optimality conditions: as the inflection point of a Gaussian density $N(\mu,\sigma)$ is at $\mu\pm\sigma$, we have $\frac {d^2}{dx^2} q_i(x) < 0 \iff |x-x_i| < \sigma$, and thus $\frac {d^2}{dx^2} q[h](x) < 0 \implies |x-x_i| < \sigma$ for some $i$.
Let $\gamma := q_1(\sigma) = e^{-\pi(\sigma-C)^2}$.
We will want $\gamma<\frac{1}{4(t+1)}$, and thus we choose any $C>\sigma+\sqrt{\frac{\log(4(t+1)}{\pi}}$.

We will additionally use the following claims in our proof.

\begin{myclaim}
  For all $h,i$, $h_i \leq q[h](x_i) \leq \max_{x\in B_\sigma(x_i)} q[h](x) \leq h_i + \gamma\sum_{j\neq i} h_j$.
\end{myclaim}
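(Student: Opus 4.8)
The plan is to verify the chain link by link, treating it as three separate inequalities. Throughout I would use the explicit form of the bumps implied by the stated parameters, namely $q_i(x) = e^{-\pi(x-x_i)^2}$ (unit height at the center $x_i = Ci$, since $2\sigma^2 = 1/\pi$), and freely exploit that all mixture weights $h_j$ are nonnegative. Note that this explicit form is consistent with the paper's definition $\gamma = q_1(\sigma) = e^{-\pi(\sigma-C)^2}$.

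First I would establish the leftmost inequality $h_i \le q[h](x_i)$. Writing $q[h](x_i) = h_i q_i(x_i) + \sum_{j\ne i} h_j q_j(x_i)$ and noting that $q_i(x_i)=1$ while every remaining term is nonnegative immediately gives the bound. The middle inequality $q[h](x_i)\le \max_{x\in B_\sigma(x_i)} q[h](x)$ is purely definitional, since the center $x_i$ lies in its own ball $B_\sigma(x_i)$.

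The substantive step is the rightmost inequality $\max_{x\in B_\sigma(x_i)} q[h](x)\le h_i+\gamma\sum_{j\ne i}h_j$. Here I would fix an arbitrary $x\in B_\sigma(x_i)$ and bound the two kinds of contributions to $q[h](x)=h_i q_i(x)+\sum_{j\ne i}h_j q_j(x)$ separately. The ``home'' term satisfies $q_i(x)\le 1$ trivially. For a ``foreign'' term with $j\ne i$, the triangle inequality together with $|x-x_i|\le\sigma$ gives $|x-x_j|\ge |x_i-x_j|-|x-x_i|\ge C|i-j|-\sigma\ge C-\sigma$, using $|i-j|\ge 1$ for $j\ne i$ and $C>\sigma$. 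Since $d\mapsto e^{-\pi d^2}$ is decreasing for $d\ge 0$ and $C-\sigma>0$ by the choice of $C$, it follows that $q_j(x)=e^{-\pi(x-x_j)^2}\le e^{-\pi(C-\sigma)^2}=\gamma$. Substituting these two bounds yields $q[h](x)\le h_i+\gamma\sum_{j\ne i}h_j$ for every $x\in B_\sigma(x_i)$, and taking the supremum over the ball completes the claim.

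The only real obstacle is the \emph{uniform} control of the foreign Gaussians over the whole ball $B_\sigma(x_i)$ rather than merely at the center; this is exactly what the triangle-inequality bound $|x-x_j|\ge C-\sigma$ secures, and it is also the reason the definition of $\gamma$ is calibrated to the worst case $q_1(\sigma)$, namely a neighboring bump ($|i-j|=1$) evaluated at the edge of the ball. Everything else is bookkeeping with nonnegative weights.
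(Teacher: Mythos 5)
Your proof is correct and takes essentially the same approach as the paper, which dismisses the first two inequalities as trivial and justifies the third by noting that each foreign Gaussian contributes at most $h_j\gamma$ to $q[h](x)$ uniformly over $x\in B_\sigma(x_i)$. Your triangle-inequality bound $|x-x_j|\ge C|i-j|-\sigma\ge C-\sigma$, combined with the monotonicity of $d\mapsto e^{-\pi d^2}$, is exactly the calculation the paper leaves implicit behind that observation.
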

\begin{myclaim}
  If $h_i > \max_{j\neq i} h_j + \gamma \sum_{k} h_k$, then $\mode(q[h])\in B_\sigma(x_i)$.
\end{myclaim}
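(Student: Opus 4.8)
The plan is to leverage Claim~1 to compare the largest value $q[h]$ attains on each ball $B_\sigma(x_j)$, show that the hypothesis makes $B_\sigma(x_i)$ strictly dominate every competing ball, and then invoke the earlier observation that $\mode(q[h])\in\bigcup_{j=0}^t B_\sigma(x_j)$ to conclude that the global maximizer is trapped in $B_\sigma(x_i)$.

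First I would extract a lower bound on the peak over the target ball. The left inequality of Claim~1 gives $\max_{x\in B_\sigma(x_i)}q[h](x)\geq q[h](x_i)\geq h_i$, so in particular the global maximum value of $q[h]$ is at least $h_i$. Next I would bound the peak over each competing ball $B_\sigma(x_j)$ with $j\neq i$. The right inequality of Claim~1 yields $\max_{x\in B_\sigma(x_j)}q[h](x)\leq h_j+\gamma\sum_{k\neq j}h_k$. Because every $h_k\geq 0$ we may enlarge the sum to all indices, $\sum_{k\neq j}h_k\leq\sum_k h_k$, and since $j\neq i$ we have $h_j\leq\max_{j'\neq i}h_{j'}$; hence $\max_{x\in B_\sigma(x_j)}q[h](x)\leq\max_{j'\neq i}h_{j'}+\gamma\sum_k h_k$. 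The hypothesis of the claim states exactly that this upper bound is strictly smaller than $h_i$.

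Putting the two bounds together, for every $j\neq i$ the supremum of $q[h]$ over $B_\sigma(x_j)$ is strictly less than $h_i$, while the global maximum value is at least $h_i$. Therefore no global maximizer can lie in a ball $B_\sigma(x_j)$ with $j\neq i$. Since the earlier second-order optimality argument guarantees that every maximizer lies in $\bigcup_{j}B_\sigma(x_j)$, every maximizer must lie in $B_\sigma(x_i)$, which is precisely the assertion $\mode(q[h])\in B_\sigma(x_i)$.

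I expect this to be essentially bookkeeping once Claim~1 is in hand, with the only points requiring care being the passage from $\sum_{k\neq j}$ to $\sum_k$ (valid by nonnegativity of the heights) and the fact that the conclusion concerns \emph{which} ball contains a maximizer rather than a single distinguished point. In particular, the argument is insensitive to whether the balls overlap: a maximizer whose value is at least $h_i$ simply cannot reside in a ball on which $q[h]$ stays strictly below $h_i$. The real conceptual work therefore lies not in this claim but in establishing Claim~1 and the containment $\mode(q[h])\in\bigcup_j B_\sigma(x_j)$, both of which we take as given here.
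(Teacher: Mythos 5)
Your proof is correct and follows essentially the same route as the paper's, which also derives Claim~2 directly from Claim~1 via the chain $q[h](x_i) \geq h_i > h_j + \gamma\sum_k h_k \geq h_j + \gamma\sum_{k\neq j} h_k \geq \max_{x\in B_\sigma(x_j)} q[h](x)$ for $j\neq i$, combined with the containment $\mode(q[h])\in\bigcup_j B_\sigma(x_j)$. Your added remarks on enlarging the sum by nonnegativity and on insensitivity to overlapping balls are sound and simply make explicit what the paper leaves implicit.
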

\begin{myclaim}
  If $h_i < h_j - \gamma \sum_{k\neq i} h_k$, then $\mode(q[h])\notin B_\sigma(x_i)$.
\end{myclaim}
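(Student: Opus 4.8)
The plan is to obtain Claim~3 directly by chaining together the two bounds supplied by the first claim. The conceptual content is minimal: since $\mode(q[h])$ is, by the smooth-density definition of the mode recalled in Section~\ref{back}, the location of the global maximum of $q[h]$, to prove $\mode(q[h])\notin B_\sigma(x_i)$ it suffices to exhibit a single point lying outside $B_\sigma(x_i)$ at which $q[h]$ strictly exceeds its maximum over $B_\sigma(x_i)$.

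First I would apply the upper bound from the first claim at index $i$, giving $\max_{x\in B_\sigma(x_i)} q[h](x) \le h_i + \gamma\sum_{k\ne i} h_k$. The hypothesis $h_i < h_j - \gamma\sum_{k\ne i} h_k$ rearranges to $h_i + \gamma\sum_{k\ne i} h_k < h_j$, so the maximum of $q[h]$ over the ball $B_\sigma(x_i)$ is strictly less than $h_j$, where $j$ is the index appearing in the hypothesis.

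Next I would invoke the lower bound from the first claim at index $j$, namely $q[h](x_j) \ge h_j$. Since the centers are separated by $|x_i - x_j| = C|i-j| \ge C > \sigma$, the point $x_j$ lies outside $B_\sigma(x_i)$. Combining the two displays yields $\max_{x\in B_\sigma(x_i)} q[h](x) < h_j \le q[h](x_j)$, so the value of $q[h]$ at a point outside $B_\sigma(x_i)$ strictly dominates every value taken inside $B_\sigma(x_i)$. Hence the global maximum of $q[h]$ is not attained anywhere in $B_\sigma(x_i)$, which is precisely the assertion $\mode(q[h])\notin B_\sigma(x_i)$.

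Because the whole argument is a short chain of inequalities resting on the first claim, I expect no genuine obstacle. The only two points warranting a moment of care are the verification that $x_j\notin B_\sigma(x_i)$, which follows immediately from the center spacing $C>\sigma$ fixed in the setup, and the identification of $\mode$ with the global maximizer of the density, which is licensed by the definition of the mode for smooth densities.
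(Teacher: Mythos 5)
Your proof is correct and matches the paper's argument exactly: the paper proves Claim~3 by the same chain $\max_{x\in B_\sigma(x_i)} q[h](x) \leq h_i + \gamma\sum_{k\neq i} h_k < h_j \leq q[h](x_j)$, using the two bounds of Claim~1. Your only additions are to make explicit two points the paper leaves implicit, namely that $x_j\notin B_\sigma(x_i)$ since $C>\sigma$, and that the mode is the global maximizer of the density; both are correct and harmless.
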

In Claim 1, the first two inequalities are trivial, and the third follows from the observation that the contribution of $q_j$ to $q[h](x)$ is upper bounded by $h_j\gamma$ for all $x\in B_\sigma(x_i)$.
Claim 2 then follows from Claim 1: for all $j$ we have $q[h](x_i) \geq h_i > h_j + \gamma\sum_k h_k \geq h_j + \gamma\sum_{k\neq j} h_k \geq \max_{x\in B_\sigma(x_j)} q[h](x)$.
Similarly, for Claim 3,
$\max_{x\in B_\sigma(x_i)} q[h](x) \leq h_i + \gamma\sum_{k\neq i} h_k < h_j \leq q[h](x_j)$.

Finally, we construct our initial mixture $h$ so that $\sum_ih_i = 1$ and the following condition holds,
\begin{equation}
h_0-\gamma>h_1>h_2>\cdots>h_t>\frac{3}{4}h_0~.\label{eq:h-condition}
\end{equation}
By Claim 2, we therefore would have $\mode(q[h]) \in B_\sigma(x_0)$.
Condition~\eqref{eq:h-condition} can be satisfied for $t>5$ (and smaller if $C$ is larger); we give one explicit construction here.
Letting $c=1/(t+1)$ for ease of notation, we may take $h_0 = (5/4)c$ and $h_1=c$.
Enforcing $\sum_i h_i = 1$, the average of the remaining elements is then $c -(1/4)c/(t-1) = (1-1/4(t-1))c$ which is strictly less than $h_1$ but strictly greater than $c(1-1/16) = (3/4)h_0$, as desired.
We may therefore choose the remaining elements to be any decreasing sequence in the interval $(3h_0/4,h_1)$ whose average is $c(1-1/4(t-1)) \in (3h_0/4,h_1)$.

Now let $\hat{\Gamma}(q[h])=r$\@.
Consider the $k\times t$ matrix
\begin{equation}
  M= \begin{bmatrix} E_{q_1}(V(r, Y)) , \dots, E_{q_t}(V(r, Y))\end{bmatrix}.
\end{equation}
Let ${h}'=(h_1',\dots, h_t')$ denote a nontrivial vector in the kernel of $M$\@.
To complete the proof, we will demonstrate that for any such $h'$ there exists a real number $\alpha\in \R$ so that $q[h+\alpha h']=q[h]+\alpha \sum_{i=1}^t h_i'q_i$ (after normalization to obtain the corresponding element in $\QQ$) is the desired density\@.
We proceed by cases on the entries of $h'$\@.

First, if $h_1',\dots ,h_t'\geq0$, then let $h_{i(\text{max})}'$ denote the entry of ${h}'$ with greatest magnitude\@.
If $h_{i(\text{max})}'$ is not unique, then choose the entry associated with the maximal initial height $h_{i(\text{max})}$\@. 
Choose $\alpha$ such that
\begin{equation*}
\frac{h_0-h_{i(\text{max})}+\gamma}{h_{i(\text{max})}'-\gamma\left(\sum_{k\neq i(\text{max})}h_k'\right)}<\alpha.
\end{equation*}
This ensures that $h_0<(h_{i(\text{max})}+\alpha h_{i(\text{max})}')-\gamma\left(1+\alpha \sum_{k\neq i(\text{max})}h_k'\right)$ so that $\mode(q[h+\alpha h'])\not\in B_{\sigma}(x_0)$ by Claim 3.
Second, if $h_1', \dots, h_t'\leq 0$, then take $-{h}'$ and treat as above\@.

In the final case, at least one pair of entries of ${h}'$ have opposite sign\@.
Let $h_{i(\text{max})}'$ denote the entry of ${h}'$ with the greatest magnitude and assume $h_{i(\text{max})}'>0$; otherwise take $-{h}'$.
If $h_{i(\text{max})}'$ is not unique, then choose the entry associated with the maximal initial height $h_{i(\text{max})}$\@.
Choose $\alpha$ such that 
\begin{equation*}
\frac{h_0-h_{i(\text{max})}+\gamma}{h_{i(\text{max})}'-\gamma\left(\sum_{k\neq i(\text{max})}h_k'\right)}<\alpha\leq \min_{i:h_i'<0}\frac{h_i}{|h_i'|}.
\end{equation*}
Once again, the lower bound ensures that $h_0<(h_{i(\text{max})}+\alpha h_{i(\text{max})}')-\gamma\left(1+\alpha \sum_{k\neq i(\text{max})}h_k'\right)$ so that $\mode(q[h+\alpha h'])\not\in B_{\sigma}(x_0)$ by Claim 3\@. 
We bound $\alpha$ from above in this case to ensure that $q[h+\alpha h']\geq 0$, meaning we have a valid density\@.

It thus remains to verify that this interval is nonempty\@.
Take an index $i$ such that $h_i'<0$.
Note that $h_{i(\text{max})}'\geq \frac{\sum_{k\neq i(\text{max})}h_k'}{t}>\gamma\sum_{k\neq i(\text{max})}h_k'$, so that $h_{i(\text{max})}'-\gamma\left(\sum_{k\neq i(\text{max})}h_k'\right)>h_{i(\text{max})}'(1-\gamma t)>\frac{3h_{i(\text{max})}'}{4}\geq\frac{3|h_i'|}{4}$.
Also note that $\frac{h_0}{4}+\gamma<\frac{h_0}{4}+\frac{1}{4(t+1)}<\frac{h_0}{2}<\frac{3h_0}{4}\cdot \frac{3}{4}$.
Chaining these inequalities together,
\begin{align*}
\frac{h_0-h_{i(\text{max})}+\gamma}{h_{i(\text{max})}'-\gamma\left(\sum_{k\neq i(\text{max})}h_k'\right)}&< \frac{h_0-h_{i(\text{max})}+\gamma}{h_{i(\text{max})}'(1-\gamma t)}\\
&<\frac{\frac{h_0}{4}+\gamma}{h_{i(\text{max})}'(1-\gamma t)}\\
                                                                                                         &<\frac{\frac{3h_0}{4}\cdot \frac{3}{4}}{h_{i(\text{max})}'(1-\gamma t)}
  \\&
  \leq\frac{\frac{3h_0}{4}\cdot \frac{3}{4}}{\frac{3|h_i'|}{4}}
  =\frac{\frac{3h_0}{4}}{|h_i'|}
  <\frac{h_i}{|h_i'|}~.
\end{align*}
As this inequality holds for all such $i$, it holds for the minimum over $i$.

In each of the above cases, there are finitely many $\alpha$ which fail to yield a unimodal density, $q[h+\alpha h']$\@.
If the $\alpha$ chosen yields such a $q[h+\alpha h']$, discard this particular $\alpha$ and choose again\@.

Similar to the conclusion of Theorem~\ref{thm:bounded-density}, the density
$q[h+\alpha h']$ (after normalization to obtain the corresponding element in $\QQ$) gives the desired contradiction\@.
\end{proof}

\section{Experimental Details}\label{app:experiments}

So as to allow comparison with Heinrich~\citeyearpar{heinrich}, we consider a density $\heinrichp$ which is a mixture of two Gaussians; letting $p_1 = N(2,1.5)$ and $p_2 = N(-2,0.5)$, where $N(\mu,\sigma)$ denotes a Gaussian density with mean $\mu$ and standard deviation $\sigma$, we set $\heinrichp = 0.75 p_1 + 0.25 p_2$.
The true mode of $\heinrichp$ is $m_0 = \mode(\heinrichp) \approx -1.987047$,
with the other local maximum occuring at $m_1 \approx 2.000000$.
The experiment performed is analogous to Heinrich~\citeyearpar{heinrich}: for each value of $\varepsilon$ as shown in Table~\ref{tab:study}, and in each of 1000 trials, we collect $n=10,000$ independent samples from $\heinrichp$, and measure the performance of the empirical modal midpiont $\hat x_\varepsilon$ relative to the true mode $m_0$ and true modal midpoint $x_\varepsilon = \Gamma_\varepsilon(\heinrichp)$.
In the case of a tie for $\hat x_\varepsilon$, we take the lowest value (which the reader will note should favor the correct value).
In sum, our results are qualitatively similar to Heinrich~\citeyearpar{heinrich}, in that the modal midpoint $\hat x_\varepsilon$ fails to estimate the mode, but we can also confirm that it fails to estimate the modal midpoint $x_\varepsilon$ as well.
Note in particular that the two ``Versus local max'' columns are identical.

\begin{table}[th]
  \centering
  \caption{The ineffectiveness of the modal midpoint as an estimate of the mode, or even of the modal midpoint itself.
    The table headings denote the following.
    $x_\varepsilon$: the true modal midpoint; MSE: mean squared error with respect to the true mode and true modal midpoint $x_\varepsilon$; Versus local max: the number of trials (out of 1000) where the estimate $\hat x_\varepsilon$ was closer to the true mode $m_0$, or true modal midpoint $x_\varepsilon$, than the other local maximum $m_1$; Minimal loss: the best empirical average loss observed in the 1000 trials.
  }
  \begin{tabular}{lS[table-auto-round,table-format=1.6]*2{S[table-format=2.2]}*2{S[table-format=3.0]}S[table-auto-round,table-format=1.3]}
  \toprule
    && \multicolumn{2}{c}{MSE} & \multicolumn{2}{c}{Versus local max} \\
    {$\varepsilon$} & {$x_\varepsilon$} & {Mode} & {Modal} & {Mode} & {Modal} & {Minimal loss}
    \\
    \midrule
    0.5   &-1.97669101040 & 15.88 & 15.80 &   0 &   0 & 0.7909 \\
    0.25  &-1.98499897122 & 11.06 & 11.05 & 302 & 302 & 0.8898 \\
    0.1   &-1.98673887353 &  8.75 &  8.75 & 447 & 447 & 0.9517 \\
    0.05  &-1.98697040102 &  9.00 &  9.00 & 433 & 433 & 0.9721 \\
    0.025 &-1.98702768942 &  9.12 &  9.12 & 424 & 424 & 0.9852 \\
    0.001 &-1.98704664678 &  8.84 &  8.84 & 431 & 431 & 0.9982 \\
    \bottomrule
  \end{tabular}
  \label{tab:study}
\end{table}

\begin{acknowledgements}
We thank Tobias Fissler and Jessie Finocchiaro for helpful suggestions, Jonas Brehmer for simplifying the proof of Lemma~\ref{lem:not-iden}, and Nicole Woytarowicz for her initial work on this project, including a proof of Lemma~\ref{lem:not-iden} in her B.S.\ thesis\@.
\end{acknowledgements}

\bibliographystyle{spbasic}      
\bibliography{ref}   

\end{document}